\newcommand{\anu}{\begin{enumerate}}
\newcommand{\pab}{\end{enumerate}}
\newcommand{\ee}{\end{equation}} 
\newcommand{\bb}{\begin{equation}} 
\newcommand{\Z}{\mathbb Z}
\newcommand{\Q}{\mathbb Q}
\newtheorem{lemma}{Lemma}[section]
\newtheorem{rem}{Remark}[section]
\newtheorem{theorem}{Theorem}[section]
\newtheorem{definition}{Definition}[section]
\newcommand{\md}{\!\!\!\!\!\pmod}
\title{ A complete classification of well-rounded real quadratic ideal lattices
}
\author{
Anitha Srinivasan\thanks{Saint Louis University-
Madrid Campus,
Avenida del Valle 34,
28003 Madrid, Spain.
email: rsrinivasan.anitha{@}gmail.com }, 
}
\date{}
\begin{document}

\maketitle

\setcounter{page}{1}

 \begin{abstract} 
We provide a complete classification of well-rounded ideal lattices
arising from real quadratic fields
$\mathbb{Q}(\sqrt{d})$ where  
$d$ is a field discriminant.
We show that the ideals that give rise to such lattices
are precisely the ones that correspond to divisors $a$ of $d$ that
satisfy
$\sqrt{\frac{d}{3}}<a<\sqrt{3d}.$
 \end{abstract} 
\section{Introduction}

Let $d$ be  field discriminant, that is $d\equiv 1\pmod 4$ is 
square free, or 
$d\equiv 0\pmod 4$ and $\frac{d}{4}$ is square free with  
$\frac{d}{4}\equiv 2$ or $3\pmod 4$.
A lattice in $\mathbb{R}^n$ is a $\mathbb{Z}$ module generated by $n$ linearly independent
vectors in $\mathbb{R}^n$.
 Recently ideal lattices have come into the forefront as 
a new tool for cryptography and coding theory, claiming to defeat the 
possible threats of a quantum attack on the hitherto established 
systems. 
  Non-zero vectors in a lattice with
least Euclidean norm play a key role as it is  
considered to be computationally difficult to find them. Therefore one is interested 
in a lattice that has a basis comprising of only such vectors.
We call such a lattice well-rounded. 
Our  focus will be on lattices that arise from ideals  in the full ring of integers
of a real quadratic field
$\Q(\sqrt{d})$. The reader is directed to  
\cite{DK}, \cite{FT} and \cite{FT2} for more information and results on the topic. 
In \cite{FT} the authors exhibit infinite families of real and imaginary
quadratic fields with ideals that give rise to well-rounded lattices.

In \cite{FT2} the authors consider divisors of $d$ that satisfy 
$\sqrt{\frac{d}{3}}<a<\sqrt{d}$ 
 and show that this condition is necessary and sufficient for the 
existence of well-rounded  ideals lattices for imaginary quadratic fields $\mathbb{Q}(\sqrt{-d})$. 
In the case of real quadratic fields, they pose a question  about  
 whether well-rounded ideals lattices can exist when this condition 
is not satisfied. Our main theorem below answers this question in the affirmative.
Indeed the necessary 
and sufficient condition given in the following theorem includes the condition given above. 
Note that an ideal is called well-rounded if the corresponding lattice  well-rounded.
\begin{theorem}
Let $K=\mathbb{Q}(\sqrt{d})$ be a real quadratic field where
$d$ is a field discriminant.
A primitive ideal $I$ in the ring of integers 
is well-rounded 
if and only if 
$I=a\mathbb Z+\frac{a-\sqrt{d}}{2}\mathbb Z$
and $a$ is a positive integer that satisfies 
$\sqrt{\frac{d}{3}}<a<\sqrt{3d}.$
\end{theorem}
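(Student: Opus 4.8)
The plan is to pass to the Minkowski embedding, attach to $I$ a single positive definite binary quadratic form, and then argue by reduction theory together with a short‑vector count.

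\textbf{Setup.} Every primitive ideal has a Hermite normal form $I=a\mathbb Z+\frac{b+\sqrt d}{2}\mathbb Z$ with $a=N(I)$, $b\equiv d\pmod 2$, $b^2\equiv d\pmod{4a}$ (this last being exactly the condition that $I$ be an ideal of $\mathcal O_K$), and $b$ well defined only modulo $2a$; I normalize so that $|b|\le a$. Embedding the $\mathbb Z$-basis $\{a,\frac{b+\sqrt d}{2}\}$ by $\mu\mapsto(\mu,\bar\mu)$ identifies $\sigma(I)$ with the lattice of the form $Q_b(x,y)=2a^2x^2+2abxy+\frac{b^2+d}{2}y^2$, which is positive definite of discriminant $-4a^2d$; completing the square gives $Q_b(x,y)=2a^2\bigl(x+\tfrac{b}{2a}y\bigr)^2+\tfrac d2y^2$. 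I use the standard facts that a planar lattice is well‑rounded iff it has two linearly independent vectors of minimal length, that any such pair is automatically a basis, and hence that $I$ is well‑rounded iff the unique reduced form equivalent to $Q_b$ has equal outer coefficients. Replacing $I$ by its Galois conjugate only reflects $\sigma(I)$, so I may conjugate freely; in particular the case $b=a$ corresponds to the ideal $a\mathbb Z+\frac{a-\sqrt d}{2}\mathbb Z$ of the statement.

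\textbf{The ``if'' direction.} For $I=a\mathbb Z+\frac{a-\sqrt d}{2}\mathbb Z$ the embedded form, in the basis $\{a,\frac{a-\sqrt d}{2}\}$, is $2a^2x^2+2a^2xy+\frac{a^2+d}{2}y^2$. Here the two linearly independent lattice vectors $(0,1)$ and $(-1,1)$ both have length‑square $\frac{a^2+d}{2}$, while $(1,0)$ has length‑square $2a^2$, $(1,-2)$ has length‑square $2d$, and a direct evaluation shows every lattice vector of height $|y|\ge 3$ is strictly longer in the range under consideration. Hence $\frac{a^2+d}{2}$ is the global minimum precisely when $a^2+d\le 4a^2$ and $a^2+d\le 4d$, i.e. for $\sqrt{d/3}\le a\le\sqrt{3d}$, and there $I$ is well‑rounded because the minimum is attained at an independent pair.

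\textbf{The ``only if'' direction.} Suppose $I$ is well‑rounded. Once we know $I$ has the special form, the membership $a\in(\sqrt{d/3},\sqrt{3d})$ follows from the computation just made, so the heart of the matter is to prove $b\equiv\pm a\pmod{2a}$. For each height $y$ put $g(y)=\min_x Q_b(x,y)=\tfrac12\bigl(\lVert by\rVert_{2a}^2+dy^2\bigr)$, where $\lVert\cdot\rVert_{2a}$ is distance to the nearest multiple of $2a$; then $\lambda_1(\sigma I)^2=\min\bigl(2a^2,\min_{y\ne 0}g(y)\bigr)$ and $g(1)=\frac{b^2+d}{2}\le\frac{a^2+d}{2}$. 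A well‑rounded lattice has a second minimal vector not parallel to the first; if it lies on the same height‑line $y=y_0$ then $\lVert by_0\rVert_{2a}=a$, whence $g(y_0)=\frac{a^2+dy_0^2}{2}$, and $g(y_0)\le g(1)$ forces $|y_0|=1$, i.e. $|b|=a$. The remaining possibilities — a second minimal vector on a different height‑line, or on the line $y=0$ — after eliminating $y$ reduce to the Diophantine equations $d=4a^2-b^2$ with $0<|b|<a$, respectively $d=4a^2$; combining these with the congruence $b^2\equiv d\pmod{4a}$ and with the hypothesis that $d$ is a field discriminant ($d$ or $d/4$ squarefree, with the corresponding congruence modulo $4$) yields a contradiction by a short $\gcd$ computation — for instance a squarefree $d$ is $\equiv1\pmod4$, whereas $d=4a^2-b^2\equiv-b^2\pmod4$ is never $1$. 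Hence $b\equiv\pm a\pmod{2a}$, and after conjugating, $I=a\mathbb Z+\frac{a-\sqrt d}{2}\mathbb Z$.

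\textbf{Main obstacle.} The work concentrates in the last two possibilities of the ``only if'' direction: for $a$ large relative to $\sqrt d$ minimal vectors of height $\ge2$ can occur, so one must first be sure the list of candidate minimal vectors is complete before reducing to the Diophantine equations above. The decisive input is arithmetic rather than geometric — every ``exotic'' well‑rounded configuration collapses to $d=4a^2-b^2$ or $d=4a^2$, and it is precisely the fundamental‑discriminant hypothesis, together with $b^2\equiv d\pmod{4a}$, that excludes them. A subsidiary point is that the two borderline values $a^2=d/3$ and $a^2=3d$ arise only for $d=12$ (the only field discriminant making $d/3$ or $3d$ a square), where the corresponding ideals happen to be well‑rounded, so the endpoints of the interval must be verified directly.
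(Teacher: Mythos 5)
Your ``if'' direction and the two easy subcases of the ``only if'' direction (two minimal vectors on the same height-line, or one on the line $y=0$) are fine, and they do recover $|b|=a$ resp.\ the equation $d=4a^2-b^2$. The genuine gap is the case you yourself label the main obstacle: two independent minimal vectors lying on \emph{different} nonzero height-lines $y_1\ne y_2$. You assert that ``after eliminating $y$'' this collapses to $d=4a^2-b^2$ or $d=4a^2$, but no such elimination is carried out, and it is not even the right equation: already for heights $(1,2)$, equality $g(1)=g(2)$ with $|b|\le a$ forces $a/2<|b|\le a$ and gives $b^2-(2a-2|b|)^2=3d$, and each pair of heights $(y_1,y_2)$ produces its own relation involving the nearest-multiple function $\lVert by_i\rVert_{2a}$. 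Since in the regime $a\ge\sqrt{3d}$ minimal vectors can a priori occur at heights up to roughly $2\sqrt{a}/d^{1/4}$ (by Hermite's bound applied to $\frac12(2ax+by)^2+\frac d2 y^2$ with determinant $a\sqrt d$), this is an unbounded family of configurations, and excluding them is exactly the content of the theorem; it requires the arithmetic input $4a\mid b^2-d$ together with $d$ being a fundamental discriminant in an essential, structural way. The paper closes precisely this case by a different mechanism: well-roundedness forces the primitivized norm form to be equivalent to a reduced \emph{symmetric} form, hence its class has order dividing $2$ in the form class group, and a computation with the composition formula (the paper's Lemma 2.2), combined with the fact that the identity form represents no integer strictly between $1$ and $|D|/4$, forces $a\mid b$ outright — no height analysis is needed. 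Without an argument of that kind (or a complete treatment of the unequal-heights configurations), your ``only if'' direction does not go through.

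A secondary remark: your observation about the endpoints is correct and worth taking seriously — for $d=12$ the ideals with $a=2,\,b=2$ and $a=6,\,b=6$ are hexagonal, hence well-rounded, even though $a$ equals $\sqrt{d/3}$ resp.\ $\sqrt{3d}$; in the even case the reduction inequality is $|c|\le\frac{b_1-c}{2}$, i.e.\ $\sqrt{d/3}\le a\le\sqrt{3d}$ with equality attainable exactly at $d=12$. So the strict inequalities in the statement (and in the paper's Claim 2 for $d\equiv0\pmod 4$) need this boundary case flagged; your proposal notices it but then cannot both verify those ideals are well-rounded and prove the statement with strict inequalities as written.
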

\begin{rem}
The ideal $I$ in the above theorem has order dividing $2$ in the class group. 
Indeed, $d=a^2-4ac$ for some integer $c$, and hence 
$a=N(I)$, the norm of the ideal $I$
divides $d$.
If an ideal $I$ is not primitive, then 
$I=(\gamma )J$, where $J$ is a primitive ideal and $\gamma $ is a positive integer.
It is easy to see that $I$ is well-rounded if and only if $J$ is well-rounded. 
Note also that if $d=4d_1$ in Theorem 1.1, 
 then well-rounded ideals exist only
if $d_1\equiv 3\pmod 4$. This is because 
if $d$ is even, then $a=2a_1$ so that 
$d_1=a_1^2-2a_1c$, from which it follows that $d_1\equiv 3\pmod 4$. This 
completes the answer to 
 \cite[Question 2]{FT2} where the authors asked whether principal WR ideals
exist in the case when $d\not\equiv 1\pmod 4$. We have show above that
$d_1\equiv 2\pmod 4$ is not possible and 
 \cite[Proposition 4]{Ray}, 
states that there are infinitely many real quadratic fields
with WR principal ideals in the case when $d_1\equiv 3\pmod 4$.  

\end{rem}
\section{Binary quadratic forms, ideals and lattices}

\subsection{Forms}

A {\sl binary quadratic form}  is
a function $f(x, y)=ax^2+bxy+cy^2$, where $a, b,c $ are real numbers 
(called the coefficients of $f$)
and 
$d=b^2-4 a c$ is the {\it discriminant}.
A form is called {\it positive definite} if the discriminant 
is negative.  We consider only 
integral forms,  
 that 
is $a, b, c$ are integers. In the case when  
 $\gcd(a, b, c)=1$, the form is {\it primitive}. 
Often we will suppress the variables $x$ and $y$,
writing a form $f(x,y)$ as simply $f=(a, b, c)$.

Two forms $f$ and $f'$ are said to be {\it equivalent}, written as
$f\sim f'$,  if for some
$A=\begin{pmatrix} \alpha &\beta \\ \gamma & \delta \end{pmatrix}
\in SL_2(\mathbb Z)$ we have
$f'(x,y)=f(\alpha x+\beta y, \gamma x+\delta y)$. It is easy to see
that $\sim$ is an equivalence relation on the set of forms of
discriminant $d$. 

A form $f$ is said to {\it represent} an integer $m$ if there exist 
coprime integers $x$ and $y$, such that $f(x,y)=m$. 
Note that equivalent forms represent the same integers and hence
sometimes we refer to a class of forms $f$ that represents a
given integer. 

The set of equivalence classes of primitive 
forms is an abelian group called the {\it form class group}, with 
group law as composition given in Definition 2.1 in the next section. 

If $f=(a, b, c)$, then the form  $(a,-b,c)$ is the inverse of $f$. 
The {\it identity form} $e$ is defined as the form 
$(1,0,\frac{-d}{4})$
or $(1, 1, \frac{1-d}{4})$ 
depending on whether $d$ is even or odd
respectively. 
A useful fact is that any form that represents the integer $1$ is 
equivalent to the identity form. 

The {\it infimum} of a binary quadratic form $f$ 
is defined as 
$m(f)=\inf\{|f(x,y)|: x, y\in \Z\}$, where
$x, y$ are not both $0$.
Note that $m(f)=m(f^{-1})$ and $m(kf)=km(f)$ for any 
real number $k$.

A form $(a, b, c)$ of negative discriminant is {\it reduced} if 
$|b|\le a\le c$
where $b>0$ in the case when $|b|=a$ or $a=c$. 
 {\it Symmetric} forms satisfy $a=c$.


\subsection {Ideals}

Recall that 
$d\equiv 0, 1\hskip2mm\md 4$  denotes a field discriminant and
all ideals are in the ring of integers.
We present below a description of an ideal and the  rule 
for composing two primitive ideals. The reader may refer to
\cite[Sections 1.1 and 1.2]{Mo} for more information.

Let 
$$w=\begin{cases}
\frac{1+\sqrt{d}}{2}, & d\equiv 1\pmod 4 \\
\sqrt{\frac{d}{4}}, &  d\equiv 0\pmod 4
.
\end{cases}
$$
The ring of algebraic integers is the module with basis 
$[1, w]$. 
A primitive ideal $I$  can be written in the form 
\begin{equation}
I=a\mathbb Z+\frac{-b+\sqrt{d}}{2}\mathbb Z,
\end{equation}
where $a, b$ are integers such that $a>0$ is the norm of the ideal,
$0\le b<2a$ 
and $4a$ divides $b^2-d$. 

If $c=\frac{b^2-d}{4a}$ then $\gcd(a, b, c)=1$ as $d$ is a field discriminant
and so $(a, b, c)=ax^2+bxy+cy^2$ is a primitive form of discriminant $d$.
Also, if $I$ is not primitive then there is a primitive ideal $J$ 
such that $I=(\gamma)J$ for some integer $\gamma$.

%

In the following definition we 
present the formula for the product
of ideals which leads to composition of forms.

\begin{definition}\emph{(Composition law)}
Let $I_k=a_k\mathbb Z+\frac{-b_k+\sqrt{d}}{2}\mathbb Z,\hskip2mm
k=1, 2,$ be two primitive ideals. Let $f_1=(a_1, b_1, c_1) \text{ and }
f_2=(a_2, b_2, c_2)$ be the corresponding  binary quadratic forms
of discriminant $d$.
Let $g=gcd(a_1, a_2, (b_1+b_2)/2)$ and let $v_1, v_2, w $ be integers
such that $$v_1a_1+v_2a_2+w(b_1+b_2)/2=g.$$ If  $a_3$ and
$b_3$ are given by
\[
\begin{split} a_3&=\frac{a_1a_2}{g^2},\\
b_3&= b_2+2\,\frac{a_2}{g}\,\left(\frac{b_1-b_2}{2}\,\,v_2-c_2w\right) 
\mod {2a_3},
\end{split}
\]
then $I_1\cdot I_2$ is the ideal
$a_3\mathbb Z +\frac{-b_3+\sqrt{d}}{2}\mathbb Z$.
Also, the composition of the forms $(a_1, b_1, c_1)$ and
$(a_2, b_2, c_2)$ is the form $(a_3, b_3, c_3)$, where $c_3$ is
computed using the discriminant equation.
\end{definition}
Note that this gives the multiplication in the class group.
\subsection{Lemmas on binary quadratic forms}
The following lemma contains some elementary results on
binary quadratic forms that we use in the proof of the main theorem. 
\begin{lemma}\emph{
\begin{enumerate}
\item
The form $(a, b, c)$ is equivalent to the 
form $(a, b+2a\delta, a\delta^2+b\delta+c)$ for any integer $\delta$.
\item 
The form $(a, b, c)$ is equivalent to the form 
$(c, -b, a)$.
\item
If $f=(a, b, c)$ is a reduced form of negative discriminant,
then $a$ and $c$ are the two smallest  
 integers represented by $f$.
\end{enumerate}
}
\end{lemma}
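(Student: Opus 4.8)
The plan is to treat the three assertions in turn: parts (1) and (2) are immediate from the definition of equivalence, while part (3) requires a short extremal estimate on the values of a reduced positive definite form.

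For part (1), I would take $A=\begin{pmatrix} 1 & \delta \\ 0 & 1 \end{pmatrix}\in SL_2(\Z)$ and expand $f(x+\delta y,\,y)=a(x+\delta y)^2+b(x+\delta y)y+cy^2$; collecting coefficients gives $ax^2+(2a\delta+b)xy+(a\delta^2+b\delta+c)y^2$, which is exactly the claimed form. For part (2), the matrix $A=\begin{pmatrix} 0 & -1 \\ 1 & 0 \end{pmatrix}\in SL_2(\Z)$ sends $f(x,y)$ to $f(-y,\,x)=ay^2-bxy+cx^2=cx^2-bxy+ay^2=(c,-b,a)$. Both are one-line verifications.

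For part (3), write $f=(a,b,c)$ with $|b|\le a\le c$; since $d=b^2-4ac<0$ and $f(1,0)=a$ we have $a>0$, and $4ac-b^2=|d|$. First, $f(1,0)=a$ and $f(0,1)=c$, so $a$ and $c$ are both represented, and a coprime pair with $y=0$ is forced to be $(\pm1,0)$, which contributes only the value $a$. The key step is to show that for every coprime pair $(x,y)$ with $y\ne 0$ one has $f(x,y)\ge c$; granting this, every integer represented by $f$ equals $a$ or is at least $c$, and since $c$ itself occurs, $a$ and $c$ are the two smallest represented integers (these coincide when $a=c$). To prove the key step I would split on $|y|$. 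If $|y|=1$, replacing $(x,y)$ by $(-x,-y)$ if needed we may take $y=1$, and $f(x,1)=ax^2+bx+c\ge |x|\bigl(a|x|-|b|\bigr)+c\ge c$, since the bracket is nonnegative ($a|x|\ge a\ge |b|$ when $x\ne 0$, and it vanishes when $x=0$). If $|y|\ge 2$, complete the square: $f(x,y)=a\bigl(x+\tfrac{b}{2a}y\bigr)^2+\tfrac{4ac-b^2}{4a}y^2\ge \tfrac{|d|}{4a}y^2\ge \tfrac{|d|}{a}=4c-\tfrac{b^2}{a}\ge 4c-a\ge 3c$, using $y^2\ge 4$, $b^2\le a^2$, and $a\le c$.

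I do not expect a genuine obstacle. The only point needing care is the boundary behaviour when $|b|=a$ or $a=c$, where $f$ may attain $a$ or $c$ at more than one projective point; this is harmless, since the statement only claims that $a$ and $c$ are the two smallest represented integers, with no uniqueness of the representing pairs, and the inequalities above already permit equality. Thus the crux is the completing-the-square estimate in part (3), and it is routine.
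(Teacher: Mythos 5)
Your proposal is correct. Parts (1) and (2) use exactly the transformation matrices $\begin{pmatrix} 1 & \delta \\ 0 & 1 \end{pmatrix}$ and $\begin{pmatrix} 0 & -1 \\ 1 & 0 \end{pmatrix}$ that the paper uses. For part (3) the paper gives no argument at all, simply citing Ribenboim, whereas you supply the standard self-contained proof: the case split on $y=0$, $|y|=1$, $|y|\ge 2$, with the estimate $ax^2+bx+c\ge |x|(a|x|-|b|)+c\ge c$ in the middle case and the completing-the-square bound $f(x,y)\ge \frac{4ac-b^2}{4a}y^2\ge 4c-\frac{b^2}{a}\ge 3c$ in the last, is exactly the classical argument behind the cited fact, and your handling of the boundary cases $|b|=a$, $a=c$ is consistent with the statement (which asks only for the two smallest represented values, values being taken at coprime pairs as in the paper's definition of representation). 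The only cosmetic remark is that $a>0$ follows already from the reduction inequalities $0\le |b|\le a$ together with $ac>0$ forced by $d<0$, so no appeal to positive definiteness is needed.
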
 
\begin{proof}
Parts are 1 and 2 are achieved by the transformation matrices
$\begin{pmatrix} 1 & \delta \\ 0 & 1 \end{pmatrix}
$
and 
$\begin{pmatrix} 0 & -1 \\ 1 & 0 \end{pmatrix}
$.
For the third part see
 \cite[Chapter 6, Section 8]{Ri}.
\end{proof}
In the following lemma we present an elementary fact on forms 
of order dividing two that is the principal tool in proving our main result.
\begin{lemma}
Let $f=(a, b, c)$ be a positive definite form of discriminant $d$ 
such that $f^2\sim e$. Then either 
$a$ divides $b$ or 
$\frac{a^2}{(\gcd(a, b))^2}\ge \frac{|d|}{4}$. 
\end{lemma}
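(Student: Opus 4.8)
The plan is to read off the leading coefficient of $f^2$ from the composition law and to exploit that, being equivalent to $e$, the form $f^2$ represents $1$. Applying Definition 2.1 with $f_1=f_2=f=(a,b,c)$, one gets $g=\gcd(a,a,b)=\gcd(a,b)$, so the composition $f^2$ is the form $(a_3,b_3,c_3)$ of discriminant $d$ with
$$a_3=\frac{a^2}{\gcd(a,b)^2};$$
the precise shape of $b_3$ will not be needed. Since $e(1,0)=1$ and equivalent forms represent the same integers, the form $(a_3,b_3,c_3)$, being equivalent to $e$, represents $1$; that is, there are coprime integers $p,q$ with $a_3p^2+b_3pq+c_3q^2=1$.

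First I would dispose of the case $a\mid b$. Since $a>0$ and $\gcd(a,b)\mid a$, this is exactly the case $\gcd(a,b)=a$, equivalently $a_3=1$, and it yields the first alternative in the statement. So assume from now on that $a\nmid b$; then $\gcd(a,b)$ is a proper positive divisor of $a$, so $a_3=\bigl(a/\gcd(a,b)\bigr)^2>1$. Consequently $q\ne 0$ in the representation above, for if $q=0$ then $a_3p^2=1$, forcing $a_3=1$, a contradiction.

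To finish, multiply $a_3p^2+b_3pq+c_3q^2=1$ by $4a_3$ and complete the square. Using that $(a_3,b_3,c_3)$ has discriminant $d$, so $4a_3c_3-b_3^2=-d=|d|$ (recall $d<0$ since $f$ is positive definite),
$$4a_3=(2a_3p+b_3q)^2+|d|\,q^2\ \ge\ |d|\,q^2\ \ge\ |d|,$$
where the final inequality holds because $q$ is a nonzero integer. Dividing by $4$ gives $\frac{a^2}{\gcd(a,b)^2}=a_3\ge\frac{|d|}{4}$, which is the second alternative.

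There is little technical difficulty here: the whole argument is short, and the only genuine idea is to recognise the leading coefficient $a^2/\gcd(a,b)^2$ of $f^2$ as the quantity appearing in the statement and to apply the elementary square–completion estimate for a form representing $1$. The one point that requires a little care is the dichotomy on whether $q$ vanishes, since it is precisely this case split that produces the two alternatives of the lemma.
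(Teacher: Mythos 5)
Your proof is correct. The first half coincides with the paper's: both apply the composition law with $f_1=f_2=f$ to read off the leading coefficient $a_3=a^2/\gcd(a,b)^2$ of $f^2$, and both identify $a\mid b$ with $a_3=1$. Where you diverge is in how the bound $a_3\ge |d|/4$ is extracted from $f^2\sim e$. The paper transports the represented value in the other direction: since $f^2$ represents $a_3$, so does $e$, and it then invokes Lemma 2.1, part 3 (a reduced form of negative discriminant represents no nonzero values smaller than its outer coefficients, cited from Ribenboim), applied to the reduced form $e=(1,0,-d/4)$ or $(1,1,\frac{1-d}{4})$, to conclude $a_3=1$ or $a_3\ge |d|/4$. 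You instead use that $f^2$ represents $1$ and prove the needed inequality from scratch by the identity $4a_3=(2a_3p+b_3q)^2+|d|\,q^2$, with the dichotomy $q=0$ versus $q\ne 0$ producing the two alternatives. Your route is more self-contained (it avoids the external citation and, in effect, reproves the relevant special case of the reduced-form minimality statement by completing the square), at the cost of a slightly longer computation; the paper's route is shorter given that Lemma 2.1, part 3 is already on hand, since it is also what drives Lemma 2.3 and the main theorem. One small point of care in your version, which you did handle: the sign bookkeeping $4a_3c_3-b_3^2=-d=|d|$ relies on $f^2$ having the same (negative) discriminant $d$, which is exactly how $c_3$ is defined in the composition law.
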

\begin{proof}
Let the composition of $f$ with itself using the composition law (Definition 2.2) be 
$(A, B, C)$. Then 
$A=\left(\frac{a}{\gcd(a, b)}\right)^2$, and as equivalent forms represent the 
same integers, the identity form $e$ represents 
$A$. As  
$e$ is either $(1,0,\frac{-d}{4})$
or $(1, 1, \frac{1-d}{4})$ 
(depending on the parity of $d$), from Lemma 
2.1, part 3, 
if $A\ne 1$ 
then 
$A=\left(\frac{a}{\gcd(a, b)}\right)^2\ge \frac{|d|}{4}$. 
Note that $A=1$ corresponds to $a| b$.
\end{proof}
\subsection{Real quadratic ideal lattices}

Let $I$ be an ideal in the ring of integers of a real quadratic field
$\mathbb{Q}(\sqrt{d})$ with basis 
$\left[a, \frac{b-\sqrt{d}}{2}\right]$. The lattice 
associated to $I$ denoted by $L_I$ has a basis matrix 
$$A_I=\begin{pmatrix} a& \frac{b-\sqrt{d}}{2} \\
a& \frac{b+\sqrt{d}}{2}

\end{pmatrix}
,$$ 
 so that if 
$\begin{pmatrix}m\\n\end{pmatrix}\in \mathbb{Z}^2$,
the  elements of the lattice 
are given by  
${\bf x}=A_I
\begin{pmatrix}m\\n\end{pmatrix}
$,
with norm form 
\begin{equation}
Q_I=||{\bf x }||^2=
\begin{pmatrix}m& n\end{pmatrix}
A_I^T A_I
\begin{pmatrix}m\\n\end{pmatrix}
.
\label{eq: norm}
\end{equation}
The lattice is {\it well-rounded } or WR if there is a basis of elements
both of which have the shortest norm.
We call an ideal well-rounded if the corresponding lattice is WR.
\begin{lemma}
An ideal $I$ is WR if and only if its norm 
form 
$Q_I=t Q$, for some positive integer $t$ and $Q$ a primitive form
that is equivalent to a reduced symmetric form.
\end{lemma}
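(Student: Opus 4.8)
The plan is to compute the Gram matrix $A_I^T A_I$ explicitly and read off $Q_I$ as an integral binary quadratic form, then recognize the well-rounded condition on $L_I$ in terms of the geometry of that form. Writing $A_I$ as in the excerpt with basis $[a,\frac{b-\sqrt d}{2}]$, a direct multiplication gives
\[
A_I^T A_I=\begin{pmatrix} 2a^2 & ab \\ ab & \frac{b^2+d}{2}\end{pmatrix},
\]
so that $Q_I(m,n)=2a^2 m^2+2ab\,mn+\frac{b^2+d}{2}n^2$. One checks this is $2a$ times the form $\bigl(a,\,b,\,\frac{b^2+d}{4a}\bigr)$; note $\frac{b^2+d}{4a}$ is an integer because $4a\mid b^2-d$ and $d$ is a field discriminant, so $\frac{b^2+d}{4a}=c+\frac{d}{2a}$ — more carefully, since $b^2\equiv d\pmod{4a}$ we get $b^2+d\equiv 2d\pmod{4a}$, so I will instead factor out $\gcd$ appropriately. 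The clean statement is $Q_I=t\,Q$ with $t$ the content of the integral form $Q_I$ and $Q$ primitive; the positive definiteness of $Q_I$ (discriminant $-d<0$, leading coefficient positive) makes $Q$ a positive definite primitive form, and its discriminant divides $d$ in an evident way. So the arithmetic half of the equivalence — that $Q_I$ is a positive integer multiple of a primitive positive definite form — is essentially automatic; the content is the symmetry condition.

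The core of the argument is the translation of ``$L_I$ is well-rounded'' into ``$Q$ is equivalent to a reduced \emph{symmetric} form.'' Here I would use that $L_I$ well-rounded means $L_I$ has a basis of two vectors each of minimal norm, equivalently the norm form $Q_I$ attains its minimum $m(Q_I)$ at two $\mathbb Z^2$-vectors $v_1,v_2$ that form a basis of $\mathbb Z^2$. Since $Q_I=tQ$, this is the same as $Q$ attaining its minimum at a basis pair. For a positive definite primitive form, reduction theory (Lemma 2.1, part 3, together with the standard reduction algorithm) says every class contains a reduced form $(a',b',c')$ with $a'=m(Q)$ and, if $a'\ne c'$, the minimum is attained (up to sign) only on $\pm$ the first basis vector — so a \emph{second} minimal vector independent of it exists precisely when $a'=c'$, i.e. the reduced form is symmetric. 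Conversely if the reduced form is symmetric, $(\pm1,0)$ and $(0,\pm1)$ are two independent minimal vectors and they form a basis of $\mathbb Z^2$. Applying the change of basis (an element of $SL_2(\mathbb Z)$) back to $Q_I$ produces the desired WR basis of $L_I$, and conversely a WR basis pulls back to exhibit the symmetric reduced form. This gives the equivalence.

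The main obstacle I anticipate is the careful handling of the ``basis'' requirement versus merely ``two independent minimal vectors'': in principle a lattice could have several minimal vectors no two of which form a basis, so I must invoke the fact that for rank-$2$ lattices (or equivalently for binary quadratic forms) two minimal vectors that are not $\mathbb Z$-proportional \emph{do} form a basis — this is a classical fact but should be cited or proved, e.g. via the reduction inequality $|2b'|\le a'\le c'$, which forces $\det$ of the two minimal vectors to be $\pm1$. The other point needing care is the exact value of the content $t$ and confirming $Q$ is primitive rather than merely integral; since $(a,b,c)$ is primitive of discriminant $d$, $Q_I=2a\cdot(a,b,\cdot)$ up to the parity subtlety, and one checks $\gcd$ of the coefficients of $Q_I$ equals $2a$ (or $a$) exactly — a short computation using $\gcd(a,b,c)=1$. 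Everything else is the routine matrix multiplication and the invocation of Lemma 2.1.
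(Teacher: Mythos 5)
Your proposal is correct and takes essentially the same route as the paper: write the integral norm form $Q_I$ as its content $t$ times a primitive positive definite form $Q$, note that scaling does not change well-roundedness, and then use reduction theory (the paper's Lemma 2.1, part 3) to conclude that $Q$ is WR exactly when its reduced representative has equal outer coefficients, i.e.\ is symmetric; you are in fact more explicit than the paper about why two linearly independent minimal vectors of a rank-two lattice automatically form a basis. The only slips are in harmless asides: the discriminant of $Q_I$ is $-4a^2d$ rather than $-d$, and the discriminant of $Q$ is a negative square multiple of $d$ divided by the squared content (a multiple-type relation, not a divisor of $d$) --- neither affects your argument, which uses only positive definiteness.
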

\begin{proof}
Let $I=(\gamma) J$ where $J$ is a primitive ideal.
Then 
$A_I=\gamma A_J$ and
from the norm equation (2),  
it follows that 
$Q_I=\gamma^2 Q_J$. Let $Q$ be the primitive form obtained 
by dividing 
the three coefficients of $Q_J$ by their  
 greatest common divisor $g$. Then   
$Q_I=\gamma^2 g Q$ and
the least positive integers represented satisfy
$m(Q_I)=\gamma^2 g m(Q)$. 
Clearly $I$ is WR if and only if $Q$ is WR.
By Lemma 2.1, part 3,  $Q$ is WR if and only if it
is equivalent to a reduced symmetric form and the result follows.
\end{proof}

\section{ Proof of Theorem 1.1}

Let 
 $I=a\mathbb Z+\frac{b-\sqrt{d}}{2}\mathbb Z$ be a primitive ideal.
Recall 
that $d$ is a field discriminant and hence $\gcd(a, b, c)=1$,
where $d=b^2-4ac$.
The corresponding lattice $L_I$ 
  has norm form (given by (2))
$$Q_I= 
2a^2m^2+2abmn+\frac{b^2+d}{2} n^2.$$ 
Note that $Q_I$ is a positive definite form of discriminant $-4a^2 d$.

Let $g=\gcd(2a^2,2ab,\frac{b^2+d}{2})$. 
As $d=b^2-4ac$ 
we have $g\le \gcd(a, b)$.
It follows that 
$$ Q=\frac{Q_I}{g}=\frac{2a^2}{g}m^2+\frac{2ab}{g}mn+\frac{b^2+d}{2g} n^2$$ 
is a primitive form 
of discriminant $-4\frac{a^2d}{g^2}$.

{\noindent{\bf Claim 1}} 
 $ Q^2 \sim e$ if and only if 
$a|b$.
\begin{proof}
It is easy to see from Definition 2.2 that for any primitive form
$(A, B, C)$, if $A|B$, then the composition of the form with itself 
gives the identity form (as $a_3=1$ in Definition 2.1 ). 
Therefore if $a|b$ then $ Q^2\sim e$. Now assume that 
$ Q^2\sim e$. 
We have $\gcd(\frac{2a^2}{g},\frac{2ab}{g})=
\frac{2a}{g}\gcd(a, b)$. 	 
By Lemma 2.2 either $a|b$ or  
$$\frac{\left(\frac{2a^2}{g}\right)^2}
{\left(\frac{2a}{g}\gcd(a, b)\right)^2}=
\frac{a^2}{\gcd(a, b)^2} 	 
\ge \frac{a^2}{g^2}d.$$ 	 
The above gives 
$d\le \frac{g^2}{\gcd(a, b)^2}\le 1$
(as $g\le \gcd(a, b)$), 
 which is not
possible. 

\end{proof}
%
%
{\noindent \bf Claim 2}  If $a|b$ then $ Q$
is equivalent to a reduced symmetric form if and only if
$a=b$ and $\sqrt{\frac{d}{3}}<a<\sqrt{3d}$. 

\begin{proof}
We first consider the case when  $d\equiv 1\pmod 4$. 
As $0\le b< 2a$, if $a|b$ it follows that $b=a$ (as $b\equiv d\pmod 4$ is odd) 
and $Q_I=(2a^2, 2a^2, \frac{a^2+d}{2})$.
Moreover $d=a^2-4ac$ and 
$\gcd(a, c)=1$ gives 
 $Q=(2a, 2a, a-2c)$. 

Using the equivalences given in parts 1 and 2 of Lemma 2.1, we have 
$Q\sim (a-2c, -2a, 2a)\sim (a-2c, 4c, a-2c)=f_0$.
Observe that $f_0$ is symmetric.
Also,  $f_0$ is reduced
if and only if  
$|4c|\le a-2c$. 
Using $c=\frac{a^2-d}{4a}$, it is easy to show that 
the condition $|4c|\le a-2c$ 
is equivalent to 
$\sqrt{d}<a<\sqrt{3d}$ when  
$c>0$ and to 
$\sqrt{\frac{d}{3}}<a<\sqrt{d}$ 
when $c<0$.


Now consider the case $d\equiv 0\pmod 4$.  
Let $d=4d_1$ and $b=2b_1$.
In this case $a|b$ gives $b=a$ or $b=0$. 
We look first at the case when $a=b$.
As $d_1=b_1^2-2b_1c$ and $\gcd(a, b, c)=1$, it 
follows that $b_1$ and $c$ are odd. 
Therefore
$Q_I=(2a^2, 2ab, 2(b_1^2+d_1))
=2(a^2, ab, 2b_1^2-ac)
=
8b_1(b_1, b_1, \frac{b_1-c}{2})$ 
and thus 
$Q=(b_1, b_1, \frac{b_1-c}{2})$.
We have the equivalences (using Lemma 2.3) 
$$
\left(b_1, b_1, \frac{b_1-c}{2}\right)
\sim \left(\frac{b_1-c}{2}, -b_1, b_1\right)
\sim \left(\frac{b_1-c}{2}, -b_1+2\frac{b_1-c}{2},
 \frac{b_1-c}{2}\right)
$$
$$\sim \left(\frac{b_1-c}{2}, c,
 \frac{b_1-c}{2}\right)=f_0.$$

The form $f_0$ is reduced iff
$|c|\le \frac{b_1-c}{2}$. 
 In an identical fashion to the case above, noting that $a=b=2b_1$ we obtain 
that $f_0$ is reduced if and only if
$\frac{\sqrt{d}}{3}<a<\sqrt{3d}$.
To complete the proof of Claim 2 it remains to consider the case $b=0$. 
We have $d_1=-ac$ and hence
$Q_I=(2a^2, 0, -2ac)$ 
and $Q=(a, 0, -c)$.
One of $(a, 0, -c)$ or $(-c, 0, a)$
is reduced and clearly neither is symmetric 
as $a\ne -c$ ($\gcd(a,c)=1$).
\end{proof}
From Lemma 2.3 the ideal 
$I$ is WR if and only if
$ Q$ is equivalent to a reduced symmetric form. 
From Claims 1 and 2
it follows that 
$I$ is WR if and only if
$a=b$ and 
$\sqrt{\frac{d}{3}}<a<\sqrt{3d}$. 
\qed

\end{document}